\newcommand{\nexteq}{\displaybreak[0]\\ &=}
\newcommand{\nnext}{\displaybreak[0]\\ &}
\newtheorem{lem}{Lemma}
\newtheorem{prop}[lem]{Proposition}
\theoremstyle{definition}
\newtheorem{rmk}[lem]{Remark}
\newcommand{\CC}{\mathcal{C}}
\begin{document}
\title[Generalization of Knuth's formula]{Generalization of Knuth's formula for the number of skew tableaux}

\author{Minwon Na}
\address{Research Center for Pure and Applied Mathematics,
Graduate School of Information Sciences,
Tohoku University, Sendai 980--8579, Japan}
\email{minwon@ims.is.tohoku.ac.jp}

\date{January 2, 2016}
\keywords{Knuth formula, skew tableau, Kostka number, Lassalle's explicit formula, symmetric group}
\subjclass[2010]{05A15, 05A19, 05E10, 20C30}

\begin{abstract}
We take an elementary approach to derive a generalization of Kunth's formula using Lassalle's explicit formula. In particular, we give a formula for the Kostka numbers of a shape $\mu\vdash n$ and weight $(m,1^{n-m})$ for $m=3,\;4$.
\end{abstract}

\maketitle

\section{Introduction}

Throughout this paper, $n$ will denote a positive integer. We write
$\mu\vdash n$ if $\mu$ is a partition of $n$, that is, a
non-increasing sequence $\mu=(\mu_1,\mu_2,\dots,\mu_k)$ of positive
integers such that $|\mu|=\sum_{i=1}^k\mu_i=n$. We say that $k$ is
the height of $\mu$ and denote it by $h(\mu)$. We denote by
$D_{\mu}$ the Young diagram of $\mu$. If
$\lambda=(\lambda_1,\lambda_2,\ldots,\lambda_h)\vdash m$ and
$D_{\lambda}\subset D_{\mu}$, then the skew shape $\mu/\lambda$ is
obtained by removing from $D_{\mu}$ all the boxes belonging to
$D_\lambda$.

Let $\mu,\;\lambda\vdash n$ and $\nu\vdash m\leq n$. A semistandard
Young tableau (SSYT) of shape $\mu$ and weight $\lambda$ is a
filling of the Young diagram $D_{\mu}$ with the numbers
$1,2,\ldots,h(\lambda)$ in such a way that
\begin{enumerate}
\item $i$ occupies $\lambda_i$ boxes, for $i=1,2,\ldots,h(\lambda)$,
\item the numbers are strictly increasing down the columns and weakly increasing along the rows.
\end{enumerate}
The Kostka number $K(\mu,\lambda)$ is the number of SSYTs of shape
$\mu$ and weight $\lambda$. In particular, if $\lambda=(1^n)$ then
such a tableau is called a standard Young tableau (SYT) of shape
$\mu$, and for a skew shape $\mu/\nu$ and weight $(1^{n-m})$ such a
tableau is called a skew SYT of skew shape $\mu/\nu$. We denote by
$f^{\mu/\nu}$ the number of skew SYTs of skew shape $\mu/\nu$.
Obviously, if $\lambda=(m,1^{n-m})\vdash n$ and $m\leq\mu_1$, then for all SSYTs of shape $\mu$ and weight $\lambda$, a box $(1,j)\in D_{\mu}$ is filled by $1$ for $1\leq j\leq m$, 
so $K(\mu,(m,1^{n-m}))=f^{\mu/(m)}$. Naturally, if $\nu=\emptyset$
then $f^{\mu}$ is the number of SYTs of shape $\mu$. We can easily
compute $f^{\mu}$ using the hook formula (see \cite{SF}), but the
problem of computing Kostka numbers is in general difficult (see
\cite{N}). There is a recurrence formula for Kostka numbers (see
\cite{L} and \cite{M}), but we have no explicit formula for Kostka
numbers.

For $z\in\mathbb{C}$, the falling factorial is defined by
$[z]_n=z(z-1)\cdots(z-n+1)=n!\binom{z}{n}$, and $[z]_0=1$. Let
$\mu=(\mu_1,\mu_2,\ldots,\mu_k)\vdash n$ and $\mu'$ be the conjugate
of $\mu$. Knuth \cite[p.67, Exercise 19]{K} shows:
\begin{equation}\label{sec1.eq.1}
f^{\mu/(2)}=\frac{f^{\mu}}{[n]_2}\left(\sum_{i=1}^k\binom{\mu_i}{2}
-\sum_{j\geq 1}\binom{\mu'_j}{2}+\binom{n}{2}\right).
\end{equation}
In fact, we can also compute $f^{\mu/\lambda}$ using
\cite[p.310]{A}, \cite[Theorem]{WF} and \cite[Corollary 7.16.3]{SE}, but this requires evaluation of determinants and
knowledge of Schur functions. If we compute $\lambda=(2)$ using
\cite[Corollary 7.16.3]{SE}, then we get the following:
\begin{equation}\label{sec1.eq.8}
f^{\mu/(2)}=\frac{f^{\mu}}{[n]_2}\left(\sum_{i=1}^k\left(\binom{\mu_i}{2}-\mu_i(i-1)\right)+\binom{n}{2}\right).
\end{equation}
Since the following equation is well known (see \cite[(1.6)]{M},
also see Proposition~\ref{prop:2} for a generalization):
\begin{equation}\label{sec1.eq.9}
\sum_{i=1}^k\mu_i(i-1)=\sum_{j\geq 1}\binom{\mu'_j}{2},
\end{equation}
we have (\ref{sec1.eq.1}). As previously stated, since $K(\mu,(m,1^{n-m}))=f^{\mu/(m)}$, we know the value of
$K(\mu,(2,1^{n-2}))$ from (\ref{sec1.eq.1}), so we are interested in
the extent to which (\ref{sec1.eq.1}) can be generalized to an arbitrary
positive integer $m$. In fact, if $\lambda=(3)$ then we get
the following using \cite[Corollary 7.16.3]{SE}:
\begin{align}\label{sec1.eq.7}
f^{\mu/(3)}&=\frac{f^{\mu}}{[n]_3}\left(
\sum_{i=1}^k\left(\mu_i(i-1)+\binom{\mu_i}{2}\right)
+(n-2)\sum_{i=1}^k\left(\binom{\mu_i}{2}-\mu_i(i-1)\right) \right)
\notag\nnext +\frac{f^{\mu}}{[n]_3}\left(
2\sum_{i=1}^k\left(\mu_i\binom{i-1}{2}+\binom{\mu_i}{3}\right)
-2\sum_{i=1}^k\binom{\mu_i}{2}(i-1) +\binom{n}{3}-\binom{n}{2}
\right).
\end{align}
The proof of (\ref{sec1.eq.7}) using Lassalle's explicit formula for characters will be given in Section~4.

Let $l$ be a nonnegative integer. Let $C(\mu)=\{j-i\mid (i,j)\in
D_{\mu}\}$ be the multiset of contents of the partition $\mu$, and
\[
p_l[C(\mu)]=\sum_{(i,j)\in D_{\mu}}(j-i)^l
\]
be the $l$th power sum symmetric function evaluated at the contents
of $\mu$. In this paper, we
take an elementary approach to derive a formula for $f^{\mu/(m)}$
using {\cite[Section 5.3]{C}} and $p_l[C(\mu)]$.

This paper is organized as follows. After giving preliminaries in
Section 2, we prove that $p_l[C(\mu)]$ can be written as a linear
combination of $q^{\pm}_{r,t}$ in Section 3. We give an
expression for $f^{\mu/(m)}$ in terms of $q^{\pm}_{r,t}$ for $m\leq
4$ in Section 4. Finally, we prove a generalization of (\ref{sec1.eq.9}) in Section 5.



\section{Preliminaries}
Throughout this section, $h,\;l,\;r$ and $t$ be
nonnegative integers. We denote by $S(n,k)$ the Stirling numbers of the second kind.
First of all, we define
\[
\mathcal{C}(r,t)=t!S(r+1,t+1).
\]
Then
\begin{align}\label{sec3.eq.1}
\CC(r,t)&=t!S(r+1,t+1)\notag\nexteq
t!(S(r,t)+(t+1)S(r,t+1))\notag\nexteq t\CC(r-1,t-1)+(t+1)\CC(r-1,t),
\end{align}
since $S(r+1,t+1)=S(r,t)+(t+1)S(r,t+1)$.

Set
\begin{equation}\label{sec3.eq.2}
\varphi_l(h,r,t)=\binom{l}{h}\CC(h,r)\CC(l-h,t).
\end{equation}
Clearly,
\begin{align}\label{sec3.eq.3}
\varphi_l(h,r,t)&= \binom{l}{l-h}\CC(l-h,t)\CC(h,r)\notag\nexteq
\varphi_l(l-h,t,r).
\end{align}

We define
\[
R_l(t)=\sum_{i=1}^ti^l.
\]

\begin{lem}\label{lem:2}
We have
\[
R_{l+1}(t)=(t+1)R_l(t)-\sum_{i=1}^tR_l(i).
\]
\end{lem}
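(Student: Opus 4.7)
My plan is to prove the identity by swapping the order of summation in the right-hand side, which is the slickest of several available routes. Starting from the definition, write
\[
\sum_{i=1}^t R_l(i) = \sum_{i=1}^t \sum_{j=1}^i j^l
\]
and interchange the indices: for a fixed $j \in \{1,\dots,t\}$, the index $i$ ranges over $\{j,j+1,\dots,t\}$, contributing a factor of $t+1-j$. This yields
\[
\sum_{i=1}^t R_l(i) = \sum_{j=1}^t (t+1-j)\, j^l = (t+1)\sum_{j=1}^t j^l - \sum_{j=1}^t j^{l+1} = (t+1) R_l(t) - R_{l+1}(t),
\]
and solving for $R_{l+1}(t)$ gives exactly the claimed formula.

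As an alternative I would keep in reserve, one can use Abel summation applied to $R_{l+1}(t) = \sum_{i=1}^t i \cdot i^l$ with $a_i = i$ and partial sums $B_i = R_l(i)$; this gives $R_{l+1}(t) = t\, R_l(t) - \sum_{i=1}^{t-1} R_l(i)$, which is brought into the stated form by adding and subtracting $R_l(t)$. A third, fully mechanical backup is induction on $t$: the base case $t=0$ is the identity $0=0$, and the induction step reduces to the elementary identity $(t+2)(t+1)^l - (t+1)^l = (t+1)^{l+1}$ after substituting the inductive hypothesis.

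I do not anticipate any real obstacle here. The only subtlety is bookkeeping the multiplicity $t+1-j$ correctly when the order of summation is reversed; everything else is a one-line rearrangement.
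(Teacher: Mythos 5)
Your main argument is correct, and it is essentially the paper's own proof read in reverse: the paper expands $(t+1)R_l(t)$ as $\sum_{i=1}^t i^{l+1}+\sum_{i=1}^t\sum_{j=1}^i j^l$, which is exactly your interchange-of-summation identity $\sum_{i=1}^t R_l(i)=\sum_{j=1}^t(t+1-j)j^l$ rearranged. Nothing further is needed.
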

\begin{proof}
We have
\begin{align*}
(t+1)R_l(t)&=(t+1)\sum_{i=1}^ti^l\nexteq
\sum_{i=1}^ti^{l+1}+\sum_{i=1}^t\sum_{j=1}^ij^l\nexteq
R_{l+1}(t)+\sum_{i=1}^tR_l(i).
\end{align*}
\end{proof}

\begin{lem}\label{lem:5}
We have
\[
R_l(t)=\sum_{i=0}^{l}\mathcal{C}(l,i)\binom{t}{i+1}.
\]
\end{lem}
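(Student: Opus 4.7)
The plan is to prove the formula by induction on $l$, using Lemma~\ref{lem:2} for the recursion on $R_l$ and (\ref{sec3.eq.1}) for the recursion on $\mathcal{C}$. The base case $l=0$ is immediate: $R_0(t)=t=\mathcal{C}(0,0)\binom{t}{1}$, since $\mathcal{C}(0,0)=0!\,S(1,1)=1$.

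For the inductive step, assume the formula for $l$ and substitute into Lemma~\ref{lem:2}. The inner sum is evaluated via the hockey-stick identity $\sum_{s=1}^t\binom{s}{i+1}=\binom{t+1}{i+2}$. Using the elementary identity $(t+1)\binom{t}{i+1}=(i+2)\binom{t+1}{i+2}$, the bracket $(t+1)\binom{t}{i+1}-\binom{t+1}{i+2}$ collapses to $(i+1)\binom{t+1}{i+2}$, so
\[
R_{l+1}(t)=\sum_{i=0}^l(i+1)\mathcal{C}(l,i)\binom{t+1}{i+2}.
\]
I would then apply Pascal's rule $\binom{t+1}{i+2}=\binom{t}{i+2}+\binom{t}{i+1}$, split the sum, and reindex the first piece by $j=i+1$ to align everything against the basis elements $\binom{t}{i+1}$. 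For each interior index $1\le i\le l$ the resulting coefficient is $i\,\mathcal{C}(l,i-1)+(i+1)\mathcal{C}(l,i)$, which is exactly $\mathcal{C}(l+1,i)$ by (\ref{sec3.eq.1}). The boundary cases $i=0$ and $i=l+1$ receive only one contribution each, giving $\mathcal{C}(l+1,0)=S(l+2,1)=1$ and $\mathcal{C}(l+1,l+1)=(l+1)!$ respectively.

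The main obstacle is purely clerical: tracking the index shifts after Pascal's rule and verifying that the coefficient on each basis term $\binom{t}{i+1}$ (including the two boundary ones) matches the output of the recursion (\ref{sec3.eq.1}). Once that matching is in place, the induction step closes cleanly.
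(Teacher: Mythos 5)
Your proof is correct and follows essentially the same route as the paper: induction on $l$ via Lemma~\ref{lem:2}, the hockey-stick identity, the identity $(t+1)\binom{t}{i+1}=(i+2)\binom{t+1}{i+2}$, Pascal's rule, and the recursion (\ref{sec3.eq.1}); the only difference is that you handle the boundary coefficients explicitly, whereas the paper absorbs them into the combined sum (where they vanish or match by convention).
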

\begin{proof}
Setting $n=q=0$ in \cite[Proposition 5.1.2]{C}. We have
\begin{equation}\label{lem:5.1}
\sum_{k=0}^l\binom{k}{m}=\binom{l+1}{m+1}.
\end{equation}

We prove the statement by induction on $l$. If $l=0$, then the statement holds since $\mathcal{C}(0,0)=1$.
Assume that the statement holds for $l-1$. Then
\begin{align*}
R_{l}(t)&=(t+1)R_{l-1}(t)-\sum_{j=1}^tR_{l-1}(j)&&\text{(by
Lemma~\ref{lem:2})}\nexteq(t+1)\sum_{i=0}^{l-1}\CC(l-1,i)\binom{t}{i+1}-\sum_{j=1}^t\sum_{i=0}^{l-1}\CC(l-1,i)\binom{j}{i+1}\nexteq
\sum_{i=0}^{l-1}(i+2)\CC(l-1,i)\binom{t+1}{i+2}-\sum_{i=0}^{l-1}\CC(l-1,i)\binom{t+1}{i+2}&&\text{(by
(\ref{lem:5.1}))}\nexteq
\sum_{i=0}^{l-1}(i+1)\CC(l-1,i)\binom{t+1}{i+2}\nexteq
\sum_{i=0}^{l-1}(i+1)\CC(l-1,i)\binom{t}{i+2}+\sum_{i=0}^{l-1}(i+1)\CC(l-1,i)\binom{t}{i+1}\nexteq
\sum_{i=1}^{l}i\CC(l-1,i-1)\binom{t}{i+1}+\sum_{i=0}^{l-1}(i+1)\CC(l-1,i)\binom{t}{i+1}\nexteq
\sum_{i=0}^{l}\left(i\CC(l-1,i-1)+(i+1)\CC(l-1,i)\right)\binom{t}{i+1}\nexteq
\sum_{i=0}^{l}\CC(l,i)\binom{t}{i+1}&&\text{(by (\ref{sec3.eq.1}))}.
\end{align*}
\end{proof}

\begin{lem}\label{lem:4}
For $z\in\mathbb{C}$, we have
\[
z^l=\sum_{i=0}^l\CC(l,i)\binom{z-1}{i}.
\]
\end{lem}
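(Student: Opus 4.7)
The plan is to derive the identity first for positive integers and then extend to all complex $z$ by a polynomial argument. The key observation is that Lemma~\ref{lem:5} already contains the desired coefficients $\CC(l,i)$, only wrapped inside the sum $R_l(t)$, so differencing will free them.

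Concretely, I would fix a positive integer $t$ and apply Lemma~\ref{lem:5} at both $t$ and $t-1$, using the convention $R_l(0)=0$. Since $R_l(t)-R_l(t-1)=t^l$ by the very definition of $R_l$, subtracting the two instances of Lemma~\ref{lem:5} gives
\begin{equation*}
t^l = \sum_{i=0}^l \CC(l,i)\left(\binom{t}{i+1}-\binom{t-1}{i+1}\right).
\end{equation*}
Pascal's rule $\binom{t}{i+1}-\binom{t-1}{i+1}=\binom{t-1}{i}$ then yields
\begin{equation*}
t^l = \sum_{i=0}^l \CC(l,i)\binom{t-1}{i},
\end{equation*}
which is the claimed identity for every positive integer $t$.

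To pass to arbitrary $z\in\mathbb{C}$, I would observe that both sides of the asserted equality are polynomials in $z$ of degree at most $l$: the left side is $z^l$, and the right side is a finite linear combination of $\binom{z-1}{i}=[z-1]_i/i!$, each of which is a polynomial in $z$. Since these two polynomials agree on the infinite set of positive integers, they agree identically, giving the result.

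The only mildly delicate point is the bookkeeping at the boundary ($i=0$ and $t=1$), which is handled by the conventions $\binom{z-1}{0}=1$ and $R_l(0)=0$; no genuine obstacle arises, since the argument is essentially a finite-difference calculation applied to Lemma~\ref{lem:5}.
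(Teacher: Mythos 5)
Your proof is correct, and it takes a genuinely different route from the paper. The paper does not touch Lemma~\ref{lem:5} at all: it starts from the classical expansion $z^l=\sum_{i=0}^l S(l,i)[z]_i$ (cited from the reference book), rewrites $[z]_i=[z-1]_i+i[z-1]_{i-1}$, and then collapses the coefficients via the Stirling recurrence $S(l,i)+(i+1)S(l,i+1)=S(l+1,i+1)$, working with complex $z$ throughout. You instead take a finite difference of Lemma~\ref{lem:5} at $t$ and $t-1$, use Pascal's rule to extract $\binom{t-1}{i}$, and then upgrade from positive integers to all of $\mathbb{C}$ by comparing two polynomials of degree at most $l$ that agree on an infinite set. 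This is not circular, since the paper proves Lemma~\ref{lem:5} by induction from Lemma~\ref{lem:2} without using Lemma~\ref{lem:4}, so your dependency order is legitimate; the boundary conventions $R_l(0)=0$ and $\binom{0}{i+1}=0$ you mention are consistent with the paper's definitions. What each approach buys: yours is a short, essentially computation-free derivation that reuses work already done and needs no Stirling-number identities beyond those packaged in $\CC$ via Lemma~\ref{lem:5}, at the cost of an extra interpolation step and of making Lemma~\ref{lem:4} logically dependent on Lemma~\ref{lem:5}; the paper's proof is self-contained and direct over $\mathbb{C}$, and keeps the two lemmas independent (indeed one could just as well run the implication in the other direction, summing Lemma~\ref{lem:4} over $z=1,\dots,t$ with the hockey-stick identity to recover Lemma~\ref{lem:5}).
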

\begin{proof}
From \cite[p.211, (4.65)]{C}, we have
\[
z^l=\sum_{i=0}^lS(l,i)[z]_i,
\]
so
\begin{align*}
z^l&=\sum_{i=0}^lS(l,i)[z]_i\notag\nexteq
\sum_{i=0}^lS(l,i)z[z-1]_{i-1}\notag\nexteq
\sum_{i=0}^lS(l,i)[z-1]_{i-1}(z-i+i)\notag\nexteq
\sum_{i=0}^lS(l,i)[z-1]_{i}+\sum_{i=1}^liS(l,i)[z-1]_{i-1}\notag\nexteq
\sum_{i=0}^lS(l,i)[z-1]_{i}+\sum_{i=0}^{l-1}(i+1)S(l,i+1)[z-1]_{i}\notag\nexteq
\sum_{i=0}^l\left(S(l,i)+(i+1)S(l,i+1)\right)[z-1]_{i}\notag\nexteq
\sum_{i=0}^lS(l+1,i+1)[z-1]_{i}\notag\nexteq
\sum_{i=0}^li!S(l+1,i+1)\binom{z-1}{i}\notag\nexteq
\sum_{i=0}^l\CC(l,i)\binom{z-1}{i}.
\end{align*}
\end{proof}


Let $\mu$, $\lambda\vdash n$. We denote by $\chi^{\mu}(\lambda)$ the
value of the character of the Specht module $S^{\mu}$ evaluated at a
permutation $\pi$ belonging to the conjugacy class of type
$\lambda$.
%
%
%
From {\cite[Example 5.3.3]{C}}, we have
\begin{align}\label{sec2.eq.3}
&\chi^{\mu}(2,1^{n-2})=
\frac{f^{\mu}}{[n]_2}2p_1[C(\mu)],\notag\nnext
\chi^{\mu}(3,1^{n-3})=
\frac{f^{\mu}}{[n]_3}3\left(p_2[C(\mu)]-\binom{n}{2}\right),\notag\nnext
\chi^{\mu}(4,1^{n-4})=
\frac{f^{\mu}}{[n]_4}
4\left(p_3[C(\mu)]-(2n-3)p_1[C(\mu)]\right),\notag\nnext
\chi^{\mu}(5,1^{n-5})= \frac{f^{\mu}}{[n]_5}
5\left(p_4[C(\mu)]-(3n-10)p_2[C(\mu)]-2p_1[C(\mu)]^2+5\binom{n}{3}-3\binom{n}{2}\right),\notag\nnext
\chi^{\mu}(6,1^{n-6})=\frac{f^{\mu}}{[n]_6}
6\left(p_5[C(\mu)]+(25-4n)p_3[C(\mu)]+2(3n-4)(n-5)p_1[C(\mu)]\right)\notag\nnext
\qquad\qquad\qquad-\frac{f^{\mu}}{[n]_6}36p_1[C(\mu)]p_2[C(\mu)].
\end{align}

\begin{rmk}
In {\cite[Example 5.3.3]{C}}, the coefficient of $d_3(\lambda)$ (in
this paper, we denote by $p_3[C(\mu)]$) in the character value
$\hat{\chi}^{\lambda}_{6,1^{n-6}}$ is $24(7-n)$. Since
$c^{\lambda}_6$ and $c^{\lambda}_7$ are incorrect in
{\cite[p.251]{C}}, the value of the character
$\hat{\chi}^{\lambda}_{6,1^{n-6}}$ is also incorrect. In fact, the
coefficient of $d_3(\lambda)$ in the character value
$\hat{\chi}^{\lambda}_{6,1^{n-6}}$ is $6(25-4n)$, as given in (\ref{sec2.eq.3}).
\end{rmk}

We obtain \cite[Example 5.3.8]{C}:
\begin{align}\label{sec2.eq.4}
\chi^{\mu}(2,2,1^{n-4})&=
\frac{f^{\mu}}{[n]_4}4\left(p_1[C(\mu)]^2-3p_2[C(\mu)]+2\binom{n}{2}\right).
\end{align}

In general, for $\mu\vdash n$ and $\lambda\vdash m\leq n$, the
character $\chi^{\mu}(\lambda,1^{n-m})$ can be expressed as a
polynomial of $c^{\mu}_r(t)$ using Lassalle's explicit formula
\cite[Theorem 5.3.11]{C}.


\section{$p_l[C(\mu)]$ and $q^{\pm}_{r,t}$}
Let $\mu=(\mu_1,\mu_2,\ldots,\mu_k)\vdash
n$, and let $r,\;t$  be
nonnegative integers. We define
\begin{equation}\label{sec1.eq.2}
q^{\pm}_{r,t}=\sum_{i=1}^k\left(\binom{\mu_i}{r+1}\binom{i-1}{t}\pm
\binom{\mu_i}{t+1}\binom{i-1}{r}\right).
\end{equation}
Observe that if $r=t$
then
\begin{equation}\label{sec3.eq.6}
q^{-}_{r,r}=0,
\end{equation}
and
\begin{align}
q^{+}_{r,t}&=q^{+}_{t,r},\label{sec3.eq.5}\\
q^{-}_{r,t}&=-q^{-}_{t,r}.\label{sec3.eq.7}
\end{align}

%

\begin{prop}\label{prop:1}
Let $\mu=(\mu_1,\mu_2,\ldots,\mu_k)\vdash n$ and $l$ be a
nonnegative integer. Then
\begin{align*}
p_{2l+1}[C(\mu)]&=\sum_{h=0}^{l}\sum_{r=0}^{h}
\sum_{t=0}^{2l+1-h}(-1)^{h}\varphi_{2l+1}(h,r,t)q^{-}_{t,r},\\
p_{2l}[C(\mu)]&=\sum_{h=0}^{l-1}\sum_{r=0}^{h}
\sum_{t=0}^{2l-h}(-1)^{h}\varphi_{2l}(h,r,t)q^{+}_{r,t}
+\frac{1}{2}(-1)^{l}\sum_{r=0}^{l} \sum_{t=0}^{l}\varphi_{2l}(l,r,t)
q^{+}_{r,t}.
\end{align*}
\end{prop}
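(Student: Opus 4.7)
The plan is to expand $p_L[C(\mu)]$ for a generic nonnegative integer $L$ in the auxiliary quantities
\[
A_{r,t}=\sum_{i=1}^{k}\binom{i-1}{r}\binom{\mu_i}{t+1},
\]
and then to use the symmetry of $\varphi_L$ to fold pairs of terms onto the combinations $q^{\pm}$. Writing $(j-i)^{L}=\sum_{h=0}^{L}(-1)^{h}\binom{L}{h}i^{h}j^{L-h}$ by the binomial theorem and swapping sums gives
\[
p_L[C(\mu)]=\sum_{h=0}^{L}(-1)^{h}\binom{L}{h}\sum_{i=1}^{k}i^{h}\sum_{j=1}^{\mu_i}j^{L-h}.
\]
Lemma~\ref{lem:5} rewrites the inner sum as $\sum_{t=0}^{L-h}\CC(L-h,t)\binom{\mu_i}{t+1}$, while Lemma~\ref{lem:4} rewrites $i^{h}=\sum_{r=0}^{h}\CC(h,r)\binom{i-1}{r}$. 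Substituting these and collecting the prefactors into $\varphi_L(h,r,t)$ from (\ref{sec3.eq.2}) yields the master identity
\[
p_L[C(\mu)]=\sum_{h=0}^{L}(-1)^{h}\sum_{r=0}^{h}\sum_{t=0}^{L-h}\varphi_L(h,r,t)\,A_{r,t}.
\]
Reading (\ref{sec1.eq.2}) directly gives $q^{+}_{r,t}=A_{r,t}+A_{t,r}$ and $q^{-}_{t,r}=A_{r,t}-A_{t,r}$, so all that remains is to symmetrise the master identity into these $A_{r,t}\pm A_{t,r}$ pairs.

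The symmetrisation uses the identity $\varphi_L(h,r,t)=\varphi_L(L-h,t,r)$ recorded in (\ref{sec3.eq.3}). In the part of the master identity with $h>L/2$ I substitute $h\mapsto L-h$ and relabel $r\leftrightarrow t$; the ranges $r\leq h$ and $t\leq L-h$ swap roles but still cover the same domain, and the summand becomes $(-1)^{L-h}\varphi_L(h,r,t)\,A_{t,r}$. Adding this folded tail to the head $h\leq L/2$ collapses each pair of summands into $(-1)^{h}\varphi_L(h,r,t)\bigl(A_{r,t}+(-1)^{L}A_{t,r}\bigr)$, which is $(-1)^{h}\varphi_L(h,r,t)\,q^{-}_{t,r}$ for $L$ odd and $(-1)^{h}\varphi_L(h,r,t)\,q^{+}_{r,t}$ for $L$ even.

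For $L=2l+1$ the pairing exhausts the range and covers $h=0,\dots,l$, giving exactly the odd case of the proposition. For $L=2l$ the pairing covers $h=0,\dots,l-1$, leaving the self-paired middle index $h=l$; here (\ref{sec3.eq.3}) with $h=l$ makes $\varphi_{2l}(l,r,t)$ symmetric in $r,t$, so averaging $A_{r,t}$ against $A_{t,r}$ through this symmetric kernel supplies the $\tfrac{1}{2}(-1)^{l}$ middle term. The only real difficulty is bookkeeping: keeping the summation bounds consistent under $h\mapsto L-h$ together with $r\leftrightarrow t$, and separating out the self-paired $h=l$ case for even $L$ where no sign cancellation is available and where the symmetry of $\varphi_{2l}(l,\cdot,\cdot)$ is precisely what supplies the missing factor of $\tfrac{1}{2}$.
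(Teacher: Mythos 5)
Your proposal is correct and follows essentially the same route as the paper: expand $(j-i)^L$ by the binomial theorem, convert the inner sums via Lemmas~\ref{lem:5} and \ref{lem:4} into the master identity with $\varphi_L$, fold the range $h>L/2$ onto $h<L/2$ using the symmetry (\ref{sec3.eq.3}) to produce $q^{\mp}$, and treat the self-paired $h=l$ term (with the symmetric kernel and the factor $\tfrac12$) separately in the even case. The only difference is the cosmetic choice of which factor carries the index $h$, which changes the sign bookkeeping but not the argument.
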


\begin{proof}
By the definition of $p_l[C(\mu)]$, we get the following:
\begin{align*}
p_l[C(\mu)]&= \sum_{i=1}^k\sum_{j=1}^{\mu_i}(j-i)^l\notag\nexteq
\sum_{i=1}^k\sum_{j=1}^{\mu_i}\sum_{h=0}^l(-1)^{l-h}\binom{l}{h}j^hi^{l-h}\notag\nexteq
\sum_{i=1}^k\sum_{h=0}^l(-1)^{l-h}\binom{l}{h}i^{l-h}R_h(\mu_i)\notag\nexteq
\sum_{i=1}^k\sum_{h=0}^l\sum_{r=0}^{h}\sum_{t=0}^{l-h}(-1)^{l-h}\binom{l}{h}\CC(h,r)\CC(l-h,t)\binom{\mu_i}{r+1}\binom{i-1}{t}
\notag \nexteq
\sum_{i=1}^k\sum_{h=0}^l\sum_{r=0}^{h}\sum_{t=0}^{l-h}(-1)^{l-h}\varphi_l(h,r,t)\binom{\mu_i}{r+1}\binom{i-1}{t}
&&\text{(by (\ref{sec3.eq.2}))},
\end{align*}
where the fourth equality follows from Lemma~\ref{lem:5} and
Lemma~\ref{lem:4}. Thus
\begin{align*}
p_{2l+1}[C(\mu)]&=\sum_{i=1}^k\sum_{h=0}^{2l+1}\sum_{r=0}^{h}
\sum_{t=0}^{2l+1-h}(-1)^{2l+1-h}\varphi_{2l+1}(h,r,t)
\binom{\mu_i}{r+1}\binom{i-1}{t}\nexteq
\sum_{i=1}^k\sum_{h=0}^{l}\sum_{r=0}^{h}
\sum_{t=0}^{2l+1-h}(-1)^{2l+1-h}\varphi_{2l+1}(h,r,t)
\binom{\mu_i}{r+1}\binom{i-1}{t}\nnext\quad+
\sum_{i=1}^k\sum_{h=l+1}^{2l+1}\sum_{r=0}^{h}
\sum_{t=0}^{2l+1-h}(-1)^{2l+1-h}\varphi_{2l+1}(h,r,t)
\binom{\mu_i}{r+1}\binom{i-1}{t}\nexteq
\sum_{i=1}^k\sum_{h=0}^{l}\sum_{r=0}^{h}
\sum_{t=0}^{2l+1-h}(-1)^{h-1}\varphi_{2l+1}(h,r,t)
\binom{\mu_i}{r+1}\binom{i-1}{t}\nnext\quad
+\sum_{i=1}^k\sum_{h=0}^{l}\sum_{r=0}^{h}
\sum_{t=0}^{2l+1-h}(-1)^{h}\varphi_{2l+1}(h,r,t)
\binom{\mu_i}{t+1}\binom{i-1}{r}\nexteq \sum_{h=0}^{l}\sum_{r=0}^{h}
\sum_{t=0}^{2l+1-h}(-1)^{h}\varphi_{2l+1}(h,r,t) \nnext\quad\quad
\cdot\left\{\sum_{i=1}^k\binom{\mu_i}{t+1}\binom{i-1}{r}-
\sum_{i=1}^k\binom{\mu_i}{r+1}\binom{i-1}{t}\right\}\nexteq
\sum_{h=0}^{l}\sum_{r=0}^{h}
\sum_{t=0}^{2l+1-h}(-1)^{h}\varphi_{2l+1}(h,r,t)q^{-}_{t,r},
\end{align*}
where the third equality can be shown as follows:
\begin{align*}
\sum_{h=l+1}^{2l+1}&\sum_{r=0}^{h}
\sum_{t=0}^{2l+1-h}(-1)^{2l+1-h}\varphi_{2l+1}(h,r,t)
\binom{\mu_i}{r+1}\binom{i-1}{t}\nexteq
\sum_{h=0}^{l}\sum_{r=0}^{2l+1-h}
\sum_{t=0}^{h}(-1)^{h}\varphi_{2l+1}(2l+1-h,r,t)
\binom{\mu_i}{r+1}\binom{i-1}{t}\nexteq
\sum_{h=0}^{l}\sum_{r=0}^{2l+1-h}
\sum_{t=0}^{h}(-1)^{h}\varphi_{2l+1}(h,t,r)
\binom{\mu_i}{r+1}\binom{i-1}{t}&&\text{(by (\ref{sec3.eq.3}))}
\nexteq \sum_{h=0}^{l}\sum_{r=0}^{h}\sum_{t=0}^{2l+1-h}
(-1)^{h}\varphi_{2l+1}(h,r,t) \binom{\mu_i}{t+1}\binom{i-1}{r}.
\end{align*}

Similarly, we have
\begin{align*}
p_{2l}[C(\mu)]&=\sum_{h=0}^{l-1}\sum_{r=0}^{h}
\sum_{t=0}^{2l-h}(-1)^{h}\varphi_{2l}(h,r,t)q^{+}_{r,t}
\nnext\quad+\sum_{i=1}^k\sum_{r=0}^{l}
\sum_{t=0}^{l}(-1)^{l}\varphi_{2l}(l,r,t)
\binom{\mu_i}{r+1}\binom{i-1}{t} \nexteq
\sum_{h=0}^{l-1}\sum_{r=0}^{h}
\sum_{t=0}^{2l-h}(-1)^{h}\varphi_{2l}(h,r,t)q^{+}_{r,t}
+\frac{1}{2}(-1)^{l}\sum_{r=0}^{l} \sum_{t=0}^{l}\varphi_{2l}(l,r,t)
q^{+}_{r,t},
\end{align*}
where the second equality can be shown as follows:
\begin{align*}
\sum_{i=1}^k&\sum_{r=0}^{l}\sum_{t=0}^{l}(-1)^{l}\varphi_{2l}(l,r,t)
\binom{\mu_i}{r+1}\binom{i-1}{t}\nexteq\frac{1}{2}(-1)^{l}
\sum_{i=1}^k\sum_{r=0}^{l}\sum_{t=0}^{l}\varphi_{2l}(l,r,t)
\binom{\mu_i}{r+1}\binom{i-1}{t}\nnext\quad +\frac{1}{2}(-1)^{l}
\sum_{i=1}^k\sum_{r=0}^{l}\sum_{t=0}^{l}\varphi_{2l}(l,r,t)
\binom{\mu_i}{t+1}\binom{i-1}{r}\nexteq \frac{1}{2}(-1)^{l} \sum_{r=0}^{l}
\sum_{t=0}^{l}\varphi_{2l}(l,r,t)q^+_{r,t}.
\end{align*}
\end{proof}

By Proposition~\ref{prop:1}, we have
\begin{align}\label{sec3.eq.8}
p_0[C(\mu)]&=\frac{1}{2}q^+_{0,0}=n,\notag\\
p_1[C(\mu)]&=q^-_{0,0}+q^-_{1,0}\notag\nexteq
q^-_{1,0},&&\text{(by (\ref{sec3.eq.6}))}\notag\\
p_2[C(\mu)]&=2q^+_{0,1}+2q^+_{0,2}-q^+_{1,0}-q^+_{1,1}\notag
\nexteq q^+_{0,1}+2q^+_{0,2}-q^+_{1,1},&&\text{(by (\ref{sec3.eq.5}))}\notag\\
p_3[C(\mu)]&=-2q^-_{1,0}+6q^-_{2,0}+6q^-_{3,0}-3q^-_{0,1}-9q^-_{1,1}-6q^-_{2,1}\notag\nexteq
q^-_{1,0}+6q^-_{2,0}+6q^-_{3,0}-6q^-_{2,1}&&\text{(by
(\ref{sec3.eq.6}) and (\ref{sec3.eq.7}))}.
\end{align}


\section{Main results}
For any $i\geq 1$, $m_i(\mu)=|\{j\mid \mu_j=i\}|$ is the
multiplicity of $i$ in $\mu$. Set
\[
z_{\mu}=\prod_{i\geq 1}i^{m_i(\mu)}m_i(\mu)!.
\]
Let $\mu\vdash n$ and $\lambda\vdash m\leq n$.
From \cite[Theorem 3.1]{S}, we have
\[
f^{\mu/\lambda}=\sum_{\nu\vdash
m}z^{-1}_{\nu}\chi^{\mu}(\nu,1^{n-m})\chi^{\lambda}(\nu).
\]
If $\lambda=(m)$, then
\begin{align}\label{sec4.eq.1}
f^{\mu/(m)}&=\sum_{\nu\vdash
m}z^{-1}_{\nu}\chi^{\mu}(\nu,1^{n-m})\chi^{(m)}(\nu)\notag\nexteq
\sum_{\nu\vdash m}z_{\nu}^{-1}\chi^{\mu}(\nu,1^{n-m}).
\end{align}

We already proved that
$p_l[C(\mu)]$
can be expressed as a linear combination of $q^{\pm}_{r,t}$
(Proposition~\ref{prop:1}), so the character value
$\chi^{\mu}(\lambda,1^{n-m})$ can be written as a polynomial in
$q^{\pm}_{r,t}$ using Lassalle's explicit formula \cite[Theorem
5.3.11]{C}. We compute $\chi^{\mu}(m,1^{n-m})$ for $2\leq m\leq 4$
and $\chi^{\mu}(2,2,1^{n-4})$ using (\ref{sec2.eq.3}),
(\ref{sec2.eq.4}) and (\ref{sec3.eq.8}).
\begin{align}\label{sec4.eq.2}
\chi^{\mu}(2,1^{n-2})&=
\frac{f^{\mu}}{[n]_2}2p_1[C(\mu)]\notag\nexteq
\frac{f^{\mu}}{[n]_2}2q^{-}_{1,0},\notag\\
\chi^{\mu}(3,1^{n-3})&=
\frac{f^{\mu}}{[n]_3}3\left(p_2[C(\mu)]-\binom{n}{2}\right)\notag\nexteq
\frac{f^{\mu}}{[n]_3}3\left(q^+_{0,1}+2q^+_{0,2}-q^+_{1,1}-\binom{n}{2}\right),\notag\\
\chi^{\mu}(4,1^{n-4})&=
\frac{f^{\mu}}{[n]_4}
4\left(p_3[C(\mu)]-(2n-3)p_1[C(\mu)]\right)\notag\nexteq
\frac{f^{\mu}}{[n]_4} 4\left((4-2n)q^-_{1,0}+6q^-_{2,0}+6q^-_{3,0}-6q^-_{2,1}\right),\notag\\
\chi^{\mu}(2,2,1^{n-4})&=
\frac{f^{\mu}}{[n]_4}4\left(p_1[C(\mu)]^2-3p_2[C(\mu)]+2\binom{n}{2}\right)\notag\nexteq
\frac{f^{\mu}}{[n]_4}4\left((q^-_{1,0})^2-3q^+_{0,1}-6q^+_{0,2}+3q^+_{1,1}+2\binom{n}{2}\right).
\end{align}
Substituting (\ref{sec4.eq.2}) into (\ref{sec4.eq.1}), we find
\begin{align}
f^{\mu/(2)}&=
\frac{1}{z_{(2)}}\chi^{\mu}(2,1^{n-2})+\frac{1}{z_{(1,1)}}\chi^{\mu}(1^n)
\notag\nexteq\frac{1}{2}\frac{f^{\mu}}{[n]_2}\cdot2q^{-}_{1,0}+\frac{1}{2}f^{\mu}\notag\nexteq
\frac{f^{\mu}}{[n]_2}\left(q^-_{1,0}+\binom{n}{2}\right),\label{sec4.eq.4}\\
f^{\mu/(3)}&=
\frac{1}{z_{(3)}}\chi^{\mu}(3,1^{n-3})+\frac{1}{z_{(2,1)}}\chi^{\mu}(2,1^{n-2})+\frac{1}{z_{(1,1,1)}}\chi^{\mu}(1^n)
\notag\nexteq\frac{1}{3}\frac{f^{\mu}}{[n]_3}\cdot3\left(q^+_{0,1}+2q^+_{0,2}-q^+_{1,1}-\binom{n}{2}\right)
+\frac{1}{2}\frac{f^{\mu}}{[n]_2}\cdot2q^{-}_{1,0}+\frac{1}{6}f^{\mu}\notag\nexteq
\frac{f^{\mu}}{[n]_3}\left(q^+_{0,1}+2q^+_{0,2}-q^+_{1,1}+(n-2)q^{-}_{1,0}+\binom{n}{3}-\binom{n}{2}\right),\label{sec4.eq.3}
\end{align}
and
\begin{align*}
f^{\mu/(4)}&=
\frac{1}{z_{(4)}}\chi^{\mu}(4,1^{n-4})+\frac{1}{z_{(3,1)}}\chi^{\mu}(3,1^{n-3})+\frac{1}{z_{(2,2)}}\chi^{\mu}(2,2,1^{n-4})\notag\nnext
\quad+
\frac{1}{z_{(2,1,1)}}\chi^{\mu}(2,1^{n-2})+\frac{1}{z_{(1,1,1,1)}}\chi^{\mu}(1^n)
\notag\nexteq \frac{1}{4}\frac{f^{\mu}}{[n]_4}\cdot
4\left((4-2n)q^-_{1,0}+6q^-_{2,0}+6q^-_{3,0}-6q^-_{2,1}\right)\notag\nnext
\quad+\frac{1}{3}\frac{f^{\mu}}{[n]_3}\cdot3\left(q^+_{0,1}+2q^+_{0,2}-q^+_{1,1}-\binom{n}{2}\right)\notag\nnext
\quad+\frac{1}{8}\frac{f^{\mu}}{[n]_4}\cdot4\left((q^-_{1,0})^2-3q^+_{0,1}-6q^+_{0,2}+3q^+_{1,1}+2\binom{n}{2}\right)\notag\nnext
\quad+\frac{1}{4}\frac{f^{\mu}}{[n]_2}\cdot2q^{-}_{1,0}+\frac{1}{24}f^{\mu}\notag\nexteq
\frac{f^{\mu}}{[n]_4}\left(\frac{1}{2}(n-2)(n-7)q^-_{1,0}+6q^-_{2,0}+6q^-_{3,0}-6q^-_{2,1}+\frac{1}{2}(q^-_{1,0})^2\right)\notag\nnext
\quad+\frac{f^{\mu}}{[n]_4}\left((n-\frac{9}{2})q^{+}_{0,1}+(2n-9)q^{+}_{0,2}-(n-\frac{9}{2})q^{+}_{1,1}\right)\notag\nnext
\quad+\frac{f^{\mu}}{[n]_4}\left(\binom{n}{4}-3\binom{n}{3}+2\binom{n}{2}\right).
\end{align*}
We get (\ref{sec1.eq.8}) and (\ref{sec1.eq.7}) by substituting (\ref{sec1.eq.2}) into (\ref{sec4.eq.4}) and (\ref{sec4.eq.3}), respectively.

\section{A generalization of a polynomial identity for a partition and its conjugate}

\begin{prop}\label{prop:2}
Let $\mu$ be a partition of an integer. Then $\mu'$ is the
conjugate of $\mu$ if and only if
\[
\sum_{i=1}^k\binom{\mu_i}{t+1}\binom{i-1}{r}=\sum_{j\geq1}\binom{\mu'_j}{r+1}\binom{j-1}{t}.
\]
for all nonnegative integers $r$ and $t$.
\end{prop}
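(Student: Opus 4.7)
The plan is to recognize both sides of the identity as the same weighted count over the cells of $D_\mu$, via the hockey stick identity
\[
\binom{m}{t+1}=\sum_{j=1}^{m}\binom{j-1}{t}.
\]
For the forward direction, applying this expansion to $\binom{\mu_i}{t+1}$ on the left rewrites it as
\[
\sum_{i=1}^k\binom{\mu_i}{t+1}\binom{i-1}{r}
=\sum_{i=1}^k\sum_{j=1}^{\mu_i}\binom{j-1}{t}\binom{i-1}{r}
=\sum_{(i,j)\in D_\mu}\binom{i-1}{r}\binom{j-1}{t},
\]
and applying it to $\binom{\mu'_j}{r+1}$ on the right gives
\[
\sum_{j\ge 1}\binom{\mu'_j}{r+1}\binom{j-1}{t}
=\sum_{j\ge 1}\sum_{i=1}^{\mu'_j}\binom{i-1}{r}\binom{j-1}{t}.
\]
When $\mu'$ is the conjugate of $\mu$, the cell set $\{(i,j):1\le i\le\mu'_j\}$ coincides with $D_\mu$, so the two double sums agree for every $r,t\ge 0$.

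For the converse, write $\bar\mu$ for the genuine conjugate partition of $\mu$, and specialize the hypothesis to $r=0$. Subtracting from it the forward direction applied to $\bar\mu$ yields
\[
\sum_{j\ge 1}\bigl(\mu'_j-\bar\mu_j\bigr)\binom{j-1}{t}=0
\qquad\text{for every }t\ge 0.
\]
Since $\binom{j-1}{t}=0$ whenever $j\le t$ and $\binom{t}{t}=1$, the matrix $\bigl[\binom{j-1}{t}\bigr]_{j,t\ge 1}$ is lower unitriangular; applied to the finitely supported sequence $(\mu'_j-\bar\mu_j)_{j\ge 1}$, this forces $\mu'_j=\bar\mu_j$ for all $j$, so $\mu'$ is indeed the conjugate of $\mu$.

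The only delicate point is in the converse: one should check that the $r=0$ slice alone already pins down the sequence $\mu'$ (so the full ``for all $r,t$'' hypothesis is in fact overdetermined), and that the finite support of $\mu'$ and $\bar\mu$ legitimately allows the triangular inversion of the binomial matrix. The forward direction, by contrast, is a clean bookkeeping exercise once hockey stick is invoked, and proceeds uniformly in $r$ and $t$.
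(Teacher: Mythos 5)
Your proof is correct, and it is worth noting where it parts ways with the paper's argument. For the ``only if'' direction you expand both sides with the hockey stick identity (which is exactly (\ref{lem:5.1}) in the paper) and identify each side with the symmetric cell-sum $\sum_{(i,j)\in D_\mu}\binom{i-1}{r}\binom{j-1}{t}$; the paper instead double-counts pairs $(I,J)$ of row- and column-subsets with $|I|=r+1$, $|J|=t+1$ and $I\times J\subseteq D_{\mu}$, grouping once by $\max J$ and once by $\max I$. Your cell-sum is precisely the refinement of that count by the pair $(\max I,\max J)$, so the two arguments are close cousins, but yours is shorter and replaces the bijective bookkeeping by a one-line algebraic expansion. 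For the converse both proofs specialize to $r=0$ and exploit triangularity of $\binom{j-1}{t}$: the paper first shows the heights agree and then runs a downward induction equating the parts one at a time, while you subtract the identity for the genuine conjugate and kill the finitely supported difference sequence in one stroke via unitriangularity --- the same idea, more compactly packaged, and your remark that the $r=0$ slice already suffices is accurate (the paper uses only $r=0$ there as well). Two cosmetic points: your ``lower unitriangular'' matrix should be indexed by $t\ge 0$ (not $t\ge 1$), and, as in the paper, one tacitly assumes $\mu'$ denotes a partition, i.e.\ has finitely many nonzero parts, so that the triangular inversion applies; neither affects correctness.
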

\begin{proof} First, we show the ``only if'' part. Then
\begin{align*}
\sum_{j\geq1}\binom{\mu'_j}{r+1}\binom{j-1}{t}&=\sum_{j\geq t+1}
\sum_{\substack{J\subseteq\{1,2,\ldots,\mu_1\},\\|J|=t+1,\\ \max
J=j}}|\{I\mid I\times J\subseteq D_{\mu},\;|I|=r+1\}|\nexteq
\sum_{i=r+1}^{k}
\sum_{\substack{I\subseteq\{1,2,\ldots,k\},\\|I|=r+1,\\ \max
I=i}}|\{J\mid I\times J\subseteq D_{\mu},\;|J|=t+1\}|\nexteq
\sum_{i=r+1}^{k}
\sum_{\substack{I\subseteq\{1,2,\ldots,k\},\\|I|=r+1,\\ \max
I=i}}|\{J\mid \max J\leq\mu_i,\;|J|=t+1\}|\nexteq \sum_{i=r+1}^{k}
\sum_{\substack{I\subseteq\{1,2,\ldots,k\},\\|I|=r+1,\\ \max
I=i}}|\{J\mid J\subseteq\{1,2,\ldots,\mu_i\},\;|J|=t+1\}|\nexteq
\sum_{i=r+1}^{k}
\sum_{\substack{I\subseteq\{1,2,\ldots,k\},\\|I|=r+1,\\ \max
I=i}}\binom{\mu_i}{t+1}\nexteq\sum_{i=r+1}^{k}\binom{\mu_i}{t+1}\binom{i-1}{r}.
\end{align*}

Next, let $\lambda$ be the conjugate of $\mu$. Set $h(\lambda)=h$.
Then
\begin{align}\label{prop:2.1}
\sum_{j=1}^h\binom{\lambda_j}{r+1}\binom{j-1}{t}&=
\sum_{i=1}^k\binom{\mu_i}{t+1}\binom{i-1}{r}\notag\nexteq
\sum_{j\geq1}\binom{\mu'_j}{r+1}\binom{j-1}{t}.
\end{align}
Setting $h(\mu')=l$ and $r=0$ in (\ref{prop:2.1}), we have
\begin{equation}\label{prop:2.2}
\sum_{j=1}^h\lambda_j\binom{j-1}{t}=
\sum_{i=1}^l\mu'_i\binom{i-1}{t}.
\end{equation}
Suppose $h>l$ and set $t=h-1$ in (\ref{prop:2.2}), then
$\lambda_h=0$. Similarly, suppose $h<l$ and set $t=l-1$ in
(\ref{prop:2.2}). Then $\mu'_l=0$, and both cases are contradictions.
Thus $h=l$.

We show that $\lambda_{h-i}=\mu'_{h-i}$ for all $i$ with $0\leq
i\leq h-1$ by induction on $i$. If $i=0$, setting $t=h-1$ in
(\ref{prop:2.2}), then $\lambda_h=\mu'_h$.

Assume that the assertion holds for some $i\in\{0,1,\ldots,h-2\}$.
Let $t=h-(i+2)$ in (\ref{prop:2.2}). By the inductive hypothesis, we
have \[
\sum_{j=h-i}^h\lambda_j\binom{j-1}{h-i-2}=\sum_{j=h-i}^h\mu'_j\binom{j-1}{h-i-2}.
\]
Therefore, $\lambda_{h-i-1}=\mu'_{h-i-1}$ since
$\binom{j-1}{h-i-2}=0$ for all $j$ with $1\leq j\leq h-j-2$. Thus
$\lambda=\mu'$ and $\mu'$ is the conjugate of $\mu$.
\end{proof}

From Proposition~\ref{prop:2}, we have
\begin{equation}\label{sec5.eq.1}
q^{\pm}_{r,t}=\sum_{i=1}^k\binom{\mu_i}{r+1}\binom{i-1}{t}\pm\sum_{j\geq
1}\binom{\mu'_j}{r+1}\binom{j-1}{t}.
\end{equation}
By substituting (\ref{sec5.eq.1}) into (\ref{sec4.eq.4}) and (\ref{sec4.eq.3}), we get (\ref{sec1.eq.1}) and
\begin{align*}
f^{\mu/(3)}&=\frac{f^{\mu}}{[n]_3}\left(q^+_{0,1}+2q^+_{0,2}-q^+_{1,1}+(n-2)q^{-}_{1,0}+\binom{n}{3}-\binom{n}{2}\right)\nexteq
\frac{f^{\mu}}{[n]_3}\left(q^+_{1,0}+2q^+_{2,0}-q^+_{1,1}+(n-2)q^{-}_{1,0}+\binom{n}{3}-\binom{n}{2}\right)
&&\text{(by
(\ref{sec3.eq.5}))}\nexteq
\frac{f^{\mu}}{[n]_3}\left(\left(
\sum_{i=1}^k\binom{\mu_i}{2}+\sum_{j\geq
1}\binom{\mu'_j}{2}\right)+2\left(
\sum_{i=1}^k\binom{\mu_i}{3}+\sum_{j\geq
1}\binom{\mu'_j}{3}\right)
\right)\nnext
\quad-\frac{f^{\mu}}{[n]_3}\left(
\sum_{i=1}^k\binom{\mu_i}{2}(i-1)+\sum_{j\geq
1}\binom{\mu'_j}{2}(j-1)
\right)\nnext
\quad+\frac{f^{\mu}}{[n]_3}\left(
(n-2)\left(\sum_{i=1}^k\binom{\mu_i}{2}-\sum_{j\geq
1}\binom{\mu'_j}{2}\right)+\binom{n}{3}-\binom{n}{2}
\right),
\end{align*}
respectively.

\end{document}